\documentclass[11pt,oneside,a4paper]{elsarticle}

\usepackage[a4paper,top=3cm,bottom=2cm,left=3cm,right=3cm,marginparwidth=1.75cm]{geometry}
 \usepackage{amsmath}\usepackage{etex}
\usepackage{amssymb}
\usepackage{amsfonts,amsthm,cases,mathtools}
\usepackage{threeparttable}
\usepackage{booktabs}
\usepackage{multirow}
\usepackage{stmaryrd}
\usepackage{chemarrow}
\usepackage[norelsize]{algorithm2e}
\usepackage{enumerate}
\usepackage{graphicx}
\usepackage[all]{xy}
\usepackage{tikz}
\usetikzlibrary{arrows}
\usepackage{multirow}
\usepackage{caption}\usepackage{subcaption}
\usepackage[colorinlistoftodos]{todonotes}
\usepackage[colorlinks=true, allcolors=blue]{hyperref}
\usepackage{bm}
\usepackage{mathrsfs}
\newtheorem{theorem}{Theorem}[section]
\newtheorem{lemma}[theorem]{Lemma}
\newtheorem{corollary}[theorem]{Corollary}

\numberwithin{equation}{section}

\newcommand{\mcal}{\mathcal}

\begin{document}

\begin{frontmatter}
  \title{ A robust and optimal method for a sixth-order \\elliptic singular perturbation problem.}
  		\author[zjnu]{Mingqing Chen}
\address[zjnu]{School of Mathematical Sciences, Zhejiang Normal University, Jinhua 321004, China}
  \ead{chenmq@zjnu.edu.cn}
	\fntext[cmqfootnote]{This work of this author was partially supported by NSFC Tianyuan Mathematics Young Researcher Project (Grant No.\ 12526578) and Zhejiang Provincial Natural Science Foundation of China (Grant No.\ LQN26A010006).} 
	
    \author[sjtu]{Jianguo Huang\fnref{hjgfootnote}}
\address[sjtu]{School of Mathematical Sciences, and MOE-LSC, Shanghai Jiao Tong University, Shanghai 200240, China}
	\ead{jghuang@sjtu.edu.cn}
	\fntext[hjgfootnote]{Corresponding author. The work of this author was partially supported by NSFC (Grant No.\ 12571390).}
	
	\begin{abstract}
		In this paper we propose a robust nonconforming finite method for a gradient-elastic Kirchhoff plate (GEKP) model with a small parameter $0<\iota<1$. While the method employs an $H^3$ cubic finite element for the discrete space, it employs an interpolation mapping into an $H^2$ nonconforming finite element space in both the lower-order terms of the bilinear form and the right-hand side. Notably, the numerical method is robust with respect to the parameter $\iota$ and uniformly convergent to order $h$ without any stabilized terms. Finally, some numerical experiments are performed to verify the theoretical findings.
	\end{abstract}
	\begin{keyword}
	gradient-elastic Kirchhoff plate model; nonconforming element method; robustness; optimal error estimates;
	\end{keyword}	
\end{frontmatter}

 \section{Introduction}

 The classical theory continuum theory neglects the microstructure of materials-such as high-strength metal alloys, polymeric foams, and porous bone, and focuses solely on their macroscopic behavior, thereby exhibiting very limited capability in describing multiscale phenomena. To capture the size-dependent mechanical behavior in micro- and nano-structures, higher-order continuum theories have been systematically developed by numerous researchers. The corresponding constitutive laws incorporate internal length scale parameters. We refer the reader to \cite{Cosserat1909,Toupin1962Elastic,MindlinTiersten1962Effects,Koiter1964couplestress,Mindlin1964Micro,Aifantis1984microstructural,AltanAifantis1992,RuAifantis1993simple} for details along this line. These theories remain challenging to implement in engineering practice due to the large number of material constants beyond the two Lam\'{e} parameters. Consequently, most studies adopt only one or two additional constants. Aifantis and his collaborators proposed in \cite{AltanAifantis1992} and \cite{RuAifantis1993simple} an one-parameter simplified strain gradient elasticity (SGE) theory for material deformation at the micro/nano scale, which is effective in simulating the deformation of elastic plastic materials. Moreover, making use of the Aifantis' SGE model and the Kirchhoff-Love hypothesis (cf. \cite{Timoshenko1959}), one can deduce a sixth-order governing equation for the gradient-elastic Kirchhoff plate (GEKP) model (cf. \cite{NiiranenNiemi2017GEK,papargyri2008static}):
 \begin{equation}\label{model:GEK}
 	\begin{cases}
 		\mathcal{D}(  \Delta^2w - \iota^2 \Delta^3 w) = f & \textrm{in}~\Omega ,\\
 		w = \partial_{\bm{n}} w = \partial_{\bm{nn}} w = 0 &\textrm{on}~\partial\Omega,
 	\end{cases}
 \end{equation}
where $\Omega\subset\mathbb R^2$ is a bounded polygon, $\partial_{\bm n}w$ is the normal derivative of $w$, and $\iota\in(0,1)$ is a real small size parameter. Without loss of generality, set ${\mathcal D}=1$ throughout this paper.

 Classical conforming finite element methods for problems requiring high regularity typically involve high-degree polynomials. To circumvent this difficulty, nonconforming finite element methods are widely employed for high-order elliptic problems (cf. \cite{WuXu2019,HuZhang2019,HuZhang2017,WangXu2013,LiWu2024}). Moreover, the $C^0$ interior penalty method is a viable option. See \cite{Gudi2011sixthIPDG,ChenLiQiu2022,ChenHuangHuang2026GEKP} for details. On the other hand, if  $\iota=0$, then the original problem will degenerate to a fourth-order elliptic problem. This implies that the solution of the reduced problem may not meet all of the boundary conditions of problem \eqref{model:GEK} and this would lead to the phenomenon of boundary layer as $\iota$ goes to zero. In order to develop a robust and optimal numerical method with respect to $\iota$, one common way is to enforce the boundary condition $\partial_{\bm nn}w=0$ weakly, for instance by means of Nitsche's method \cite{Nitsche1971}, or penalty techniques \cite{Arnold1982}. A robust $C^0$ interior penalty method for a GEKP model over polygons is proposed in \cite{ChenHuangHuang2026GEKP} with weakly imposed two boundary conditions. Although the proposed method has optimal convergence rate and its discrete space employs only cubic polynomials, the inclusion of stabilization terms renders the scheme relatively complex in practical implementation. Another approach is to introduce an interpolation from the discrete space to another suitable finite element space in both the bilinear form associated with the lower-order differential operator and the right-hand side term, as in the discrete form of \cite{WangXuHu2006}. The authors in \cite{WangXuHu2006} proposed a nonconforming finite element method using a modified Morley element and the Morley element for a fourth-order singular perturbation problem. It should be noted that the finite element method does not require any stabilization terms and converges uniformly with respect to the small parameter.%

  Following the idea in \cite{WangXuHu2006}, this paper presents an $H^3$ nonconforming finite element method \eqref{discreteForm:GEK} by employing the $H^3$ cubic nonconforming finite element space $V_h$ in \cite{HuZhang2019}. But, we use a quasi interpolation of the finite element function in the bilinear form associated with the operator $\Delta^2$ and the right-hand side term. Moreover, the quasi interpolation operator $I_h^{NZT}$ is constructed from $V_h$ into the new Zienkiewicz-type (NZT) nonconforming finite element space designed for problem \eqref{weakForm:KPB} in \cite{WangShiXu2007NZT}. Furthermore, by adapting the analysis framework of \cite{Guzman2012Nitsche,ChenHuangHuang2025} and the regularity result in \cite{ChenHuangHuang2026GEKP}, we can derive the optimal and uniform error estimates with respect to the size parameter.

The rest of this paper is organized as follows. In Sect. \ref{SectNotation}, we introduce some notation and existing results for later analysis. In Sect. \ref{SectDiscreteF}, we design a nonconforming finite element method of problem \eqref{model:GEK} with the help of an interpolation operator. In Sect. \ref{Sect:ErrorAnalysis}, we analyse the convergence properties of the proposed method. In the last section, a numerical experiment is performed to verify the theoretical findings.

 \section{Preliminaries} \label{SectNotation}
  Throughout this paper, let $\Omega\subset\mathbb R^2$ be a bounded polygon, which is partitioned into a family of shape regular triangles $\mathcal{T}_h=\{K\}$. That is to say that there exists a constant $\gamma_0\geq 1$ such that
 \[
 \frac{h_K}{\rho_K}\leq \gamma_0 \quad \forall \;K\in \mcal T_h,
 \]
 where $h_K={\rm diam}(K)$ and $h=\max_{K\in \mathcal{T}_h}h_K$. Use $\mathcal V(\mathcal T_h)$ and $\mathcal V^i(\mathcal T_h)$ (resp. $\mathcal{E}(\mathcal T_h)$ and $\mathcal{E}^i(\mathcal T_h)$) to stand for the sets of all and interior vertices (resp. edges) of $\mathcal T_h$, respectively.
 We then introduce the patch of vertices and edges for further use.For any vertex $\delta\in \mathcal V(\mathcal T_h)$ and edge $e\in \mathcal{E}(\mathcal T_h)$, write $\omega_{\delta}$ and $\omega_{e}$ to be respectively the unions of all triangles in $\mathcal{T}_{\delta}$ and $\mathcal{T}_{e}$, where $\mathcal{T}_{\delta}$ and $\mathcal{T}_{e}$ is the set of all triangles in $\mathcal{T}_h$ sharing common vertex $\delta$ and edge $e$, respectively.
 In addition, for a finite set $A$, denote by $\# A$ its cardinality. For each $K \in \mcal T_h$, we use $\bm n_{\partial K}$ to denote the unit outward normal vector of $\partial K$ , which will be abbreviated as $\bm n$ if it does not cause any confusion. For an edge $e \in \mathcal E(\mathcal T_h)$, let $\bm n_e$ be its unit normal vector, which will be abbreviated
 as $\bm n$. Consider two adjacent triangles, $K_1$ and $K_2$ sharing an interior edge $e$. Define the jump of a function $v$ on $e$ as
 \begin{align*}
 	[\![ v]\!] |_e \coloneqq   (v|_{K_1})|_e{\bm n}_e\cdot {\bm n}_{\partial K_1} + (v|_{K_2})|_e{\bm n}_e\cdot {\bm n}_{\partial K_2}%
 		.
 	 \end{align*}
 On a face $e$ lying on the boundary $\partial \Omega$, the jump becomes $	[\![ v]\!] |_e = v|_e$.

 The weak formulation of problem \eqref{model:GEK} is to find $w\in V\coloneqq H^3_0(\Omega)$ such that
 \begin{equation}\label{weakForm:GEK}
 	\iota^2a(w,v) + b(w,v) = (f,v)\quad \forall\;v\in V,
 \end{equation}
 where
 \[
 a(w,v) =   (\nabla^3 w, \nabla^3 v) \quad\text{and}\quad  b(w ,v) =  (\nabla^2   w , \nabla^2 v).%
 \]
 We recall the following regularity result for problem \eqref{weakForm:GEK} from \cite{ChenHuangHuang2026GEKP}:
 \begin{theorem}\label{th:GEKRegularity}
 	Assume $\Omega$ is a convex polygon. Let $w\in H^3_0(\Omega)$ and $w_0\in H^{2}_0(\Omega)$ be the solutions of problems \eqref{weakForm:GEK} and \eqref{weakForm:KPB}, respectively. Then it holds $w\in H^{4}(\Omega)$ and
 	\begin{equation}\label{eq:GEKRegularity}
 		|w-w_0|_{2} + \iota\|w\|_3 + \iota^2\|w\|_{4}\lesssim \iota^{1/2}\|f\|_0.
 	\end{equation}
 	Here and hereafter, the hidden constant is independent of the size parameter $\iota$.
 \end{theorem}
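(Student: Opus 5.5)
The plan is to compare $w$ with the reduced solution $w_0\in H^2_0(\Omega)$ of \eqref{weakForm:KPB}, i.e.\ $b(w_0,v)=(f,v)$. On the convex polygon we use $w_0\in H^3(\Omega)$ with $\|w_0\|_3\lesssim\|f\|_0$. The key fact is that $w_0$ satisfies only $w_0=\partial_{\bm n}w_0=0$ on $\partial\Omega$, so $\partial_{\bm{nn}}w_0\neq0$ in general. This is the source of the boundary layer and of the $\iota^{1/2}$ factor.

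\textbf{Step 1 (energy estimate via a layer corrector).} We build a cut-off correction $\tilde w_0=w_0-z_\iota$ in $H^3_0(\Omega)$. Here $z_\iota$ is supported in a strip of width $\iota$ along $\partial\Omega$ and removes the trace $\partial_{\bm{nn}}w_0$. Concretely, take $z_\iota=\chi(\rho/\iota)\,E(w_0)$, where $\rho$ is the distance to $\partial\Omega$, $\chi$ is a smooth cut-off, and $E$ is a local extension that keeps $w_0,\partial_{\bm n}w_0$ zero. Using $w_0\in H^3$ and the trace inequality on a strip, $\|v\|_{0,S_\iota}\lesssim \iota^{1/2}\|v\|_{1}$, we aim for
\[
\iota^{k-2}|z_\iota|_k\lesssim \iota^{1/2}\|w_0\|_3,\qquad k=2,3.
\]
Subtracting the two weak forms gives, for $v\in H^3_0$,
\[
\iota^2a(w-\tilde w_0,v)+b(w-\tilde w_0,v)=-\iota^2a(\tilde w_0,v)+b(z_\iota,v).
\]
Choosing $v=w-\tilde w_0$ then yields $\iota|w-\tilde w_0|_3+|w-\tilde w_0|_2\lesssim \iota^{1/2}\|f\|_0$. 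This requires bounding $\iota|\tilde w_0|_3\lesssim\iota\|w_0\|_3+\iota|z_\iota|_3\lesssim \iota^{1/2}\|f\|_0$. The triangle inequality then gives the first two terms of \eqref{eq:GEKRegularity}. An alternative route avoids constructing $z_\iota$: integrate $\iota^2a(w_0,v)$ by parts, which leaves the boundary term $\iota^2\int_{\partial\Omega}\partial_{\bm{nn}}w_0\,\partial_{\bm{nn}}v$, and bound this term by a multiplicative trace inequality. Either way, the $\iota^{1/2}$ gain comes from the boundary.

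\textbf{Step 2 ($H^4$ bound).} The equation can be rewritten as $\iota^2\Delta^3w=\Delta^2 w-f$. Set $u=\Delta w$. Then $\iota^2\Delta^2 u-\Delta u=\ldots$, but the boundary conditions of $u$ are not of clamped type, so I would not follow that route. Instead I would use the $H^4$ regularity of the triharmonic operator with clamped conditions on convex polygons, assumed as in \cite{ChenHuangHuang2026GEKP}. Writing $\Delta^3 w=\iota^{-2}(\Delta^2w-f)\in H^{-2}$ would give $\|w\|_4\lesssim\iota^{-2}\|\Delta^2 w-f\|_{-2}$. That is too weak; we need $\|w\|_4\lesssim\iota^{-3/2}\|f\|_0$.

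The better approach is to work with $w-\tilde w_0$. Its equation is $\iota^2\Delta^3(w-\tilde w_0)-\Delta^2(w-\tilde w_0)=-\iota^2\Delta^3\tilde w_0+\Delta^2 z_\iota$ in $H^{-2}$. Applying triharmonic $H^4$ regularity, i.e.\ $\|\phi\|_4\lesssim\|\Delta^3\phi\|_{-2}$ for $\phi\in H^3_0$, gives
\[
\iota^2\|w-\tilde w_0\|_4\lesssim |w-\tilde w_0|_2+\iota^2\|\tilde w_0\|_4+|z_\iota|_2 .
\]
Finally, $\iota^2\|\tilde w_0\|_4$ is bounded by $\iota^2\|w_0\|_4+\iota^2|z_\iota|_4$. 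Since $w_0$ is only in $H^3$, this term has to be handled by smoothing $w_0$ at scale $\iota$ (mollify it before cutting off), which gives $\iota^2|\cdot|_4\lesssim\iota\|w_0\|_3\cdot\iota^{0}$. This is the main obstacle: $w_0\notin H^4$ in general on a polygon. So the corrector must be built from a regularization $w_0^\iota$ with $|w_0-w_0^\iota|_2\lesssim\iota\|w_0\|_3$ and $\iota^{k-3}|w_0^\iota|_k\lesssim \|w_0\|_3$. The estimate then closes with $\iota^{1/2}\|f\|_0$ on the right.
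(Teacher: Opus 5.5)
The paper gives no proof of this statement; it imports it from \cite{ChenHuangHuang2026GEKP}, so your argument is an independent, self-contained route. Its core is sound. The corrector $\tilde w_0=(1-\chi_\iota)w_0\in H^3_0(\Omega)$ and the identity $\iota^2a(w-\tilde w_0,v)+b(w-\tilde w_0,v)=-\iota^2a(\tilde w_0,v)+b(z_\iota,v)$ give the first two terms. Compared with a bare citation, your route shows that everything rests on two analytic inputs: $\|w_0\|_3\lesssim\|f\|_0$, and the $H^{-2}\to H^4$ shift for $\Delta^3$ with clamped conditions on convex polygons. The latter is exactly where the claim $w\in H^4(\Omega)$ comes from. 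It also shows that the factor $\iota^{1/2}$ is the $L^2$ mass of $\nabla^2w_0$ in the boundary strip $S_\iota$.

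Step 1 needs two details made explicit.
\begin{itemize}
\item Besides the strip trace inequality for $\nabla^2w_0$, you need the Friedrichs-type bounds $\|w_0\|_{0,S_\iota}\lesssim\iota|w_0|_{1,S_\iota}\lesssim\iota^2|w_0|_{2,S_\iota}$, which follow from $w_0=\nabla w_0=0$ on $\partial\Omega$.
\item The distance function is not smooth on a polygon. Take $\chi_\iota$ to be a mollified indicator of the strip, with $|\nabla^k\chi_\iota|\lesssim\iota^{-k}$ and $\chi_\iota\equiv1$ near $\partial\Omega$; the extension $E$ is then unnecessary.
\end{itemize}
Your ``alternative route'' is misstated and should be dropped. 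One integration by parts in $(\nabla^3w_0,\nabla^3v)$ pairs $\nabla^2w_0$ with third derivatives of $v$, not $\partial_{\bm{nn}}v$, and it requires $H^4$ regularity of one of the factors.

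Step 2 is a detour caused by a misjudgment. You rejected $\|w\|_4\lesssim\iota^{-2}\|\Delta^2w-f\|_{-2}$ as too weak, but it is sufficient. By \eqref{weakForm:KPB} we have $f=\Delta^2w_0$ in $H^{-2}(\Omega)$, so $\iota^2\Delta^3w=\Delta^2(w-w_0)$ and
\[
\iota^2\|w\|_4\lesssim\|\Delta^2(w-w_0)\|_{-2}\le|w-w_0|_2\lesssim\iota^{1/2}\|f\|_0
\]
follows directly from Step 1. No $H^4$ norm of $\tilde w_0$ ever enters, so the mollification of $w_0$ is superfluous. Your mollified version does close, after splitting $w_0^\iota=w_0+(w_0^\iota-w_0)$ in the strip to compensate for the lost boundary conditions, but it is unnecessary.
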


 Set $\iota=0$, problem \eqref{model:GEK} will reduce to a fourth-order elliptic problem:
 \begin{equation}\label{model:KPB}
 	\begin{cases}
 		\Delta^2 w_0  = f  & \textrm{in}~\Omega ,\\
 		w_0 = \partial_{\bm{n}} w_0 = 0&\textrm{on}~\partial\Omega .
 	\end{cases}
 \end{equation}
 Its weak formulation is to find $w_0\in  H^2_0(\Omega)$ such that
 \begin{equation}\label{weakForm:KPB}
 	b(w_0 ,v) = (f,v)\quad \forall\;v\in H^2_0(\Omega).
 \end{equation}
 As stated in Corollary~7.3.2.5 in \cite{Grisvard1985}, for any convex polygon $\Omega$, one has $w_0\in H^3(\Omega)$ and
 \begin{equation}\label{eq:regularityKPB}
 	\|w_0\|_3\leq C\|f\|_0.
 \end{equation}
 with a positive constant $C$.

 \section{Discrete form}\label{SectDiscreteF}
 We begin by introducing the notation that will be used. Given a triangle $K$ with vertices $\bm x_i$, denote by $  e_i$ $(1 \leq i \leq	3)$ the edges of $K$ without $\bm x_i$ as its vertex, and
 by $\lambda_1$, $\lambda_2$, $\lambda_3$ the barycentric coordinates of $K$. Denote by $|K|$ and $|e_i|$ the
 measures of $|K|$ and $|e_i|$, respectively.
 Let $b_k$ be the bubble function defined by $b_k = \lambda_1\lambda_2\lambda_3$.
 \subsection{A cubic $H^3$ nonconforming finite element}
   A cubic $H^3$ nonconforming macro-element $(K,V_K,\mathcal N_K^V)$ is constructed in \cite{HuZhang2019}, which is designed for solving a tri-harmonic equation. Besides, the finite element solution converges at the optimal order in $H^3$-norm. The new element is a piecewise cubic polynomial on three triangles, $\bm x_0 \bm x_2\bm x_3$, $\bm x_0 \bm x_3\bm x_1$ and
 $\bm x_0 \bm x_1\bm x_2$. The components of $\mathcal N_K^V$ are shown in Figure \ref{Fig:CubicH3}. In addition, we require some continuity constraints to properly define the finite element, whose precise construction is detailed in Sect. 3 of \cite{HuZhang2019}.  %
 On a single macro-element, the finite element space is defined by
\begin{align}
 V_K =\{ & v_h \in L^2(K)\;|\; v_h |_{K_i} \in \mathbb P_3(K_i),\, K = \cup_{i=1}^3 K_i,\,\text{\rm int}(K_j)\cap\text{\rm int}(K_i) = \emptyset, \text{ the function}\notag\\
&\text{values and the first derivatives of $v_h$ are continuous at the vertexes of all three $K_i$,}\notag\\
&\text{the second derivatives of $v_h$ are continuous across the three internal edges at the}\notag\\
&\text{middle point $m^0_i$ and the sum of three jumps of a third-order scaled directional}\notag\\
&\text{derivative at $m^0_i$ is limited to zero}\}.
 \end{align}
The global space of the cubic $H^3$ nonconforming finite element can be defined abstractly as
 \begin{align}
 	V_h = \{& v_h \in L^2(\Omega)\;|\; v_h |_K\in V_K\quad\forall K\in \mathcal T_h,\notag\\
 	&\partial_{x_1^{\alpha_1}x_2^{\alpha_2}}v_h |_K(x_j) =\partial_{x_1^{\alpha_1}x_2^{\alpha_2}}v_h |_{K'}(x_j)\quad \forall\bm x_j\in \mathcal V^i(\mcal T_h),\,K,K'\in \omega(\bm x_j),\,|\alpha| = 0,1\notag\\
 	&\partial_{\bm {nn}}v_h|_K(\bm m_j)=\partial_{\bm {nn}}v_h|_{K'}(\bm m_j)  \quad \forall\, \bm m_j\in   e_j ,\,  e_j  \in \mathcal E^{i}(\mcal T_h)\notag\\
 	& \partial_{x_1^{\alpha_1}x_2^{\alpha_2}}v_h |_K(\bm x_j)=0\quad \forall\bm x_j \in \mathcal V^{\partial}(\mcal T_h),\,|\alpha| = 0,1\notag\\
 	&\partial_{\bm {nn}}v_h|_K(\bm m_j)=0\quad \forall\, \bm m_j\in   e_j ,\,  e_j  \in \mathcal E^{\partial}(\mcal T_h)\}.
 \end{align}
 Obviously, it holds
 \begin{align*}%
 	\int_e [\![\nabla^2v]\!] = 0 \quad \forall v\in V_h,\,e\in\mathcal E(\mathcal T_h).
 \end{align*}

 \begin{figure}[t]
 	\centering
 	\includegraphics[scale=0.3]{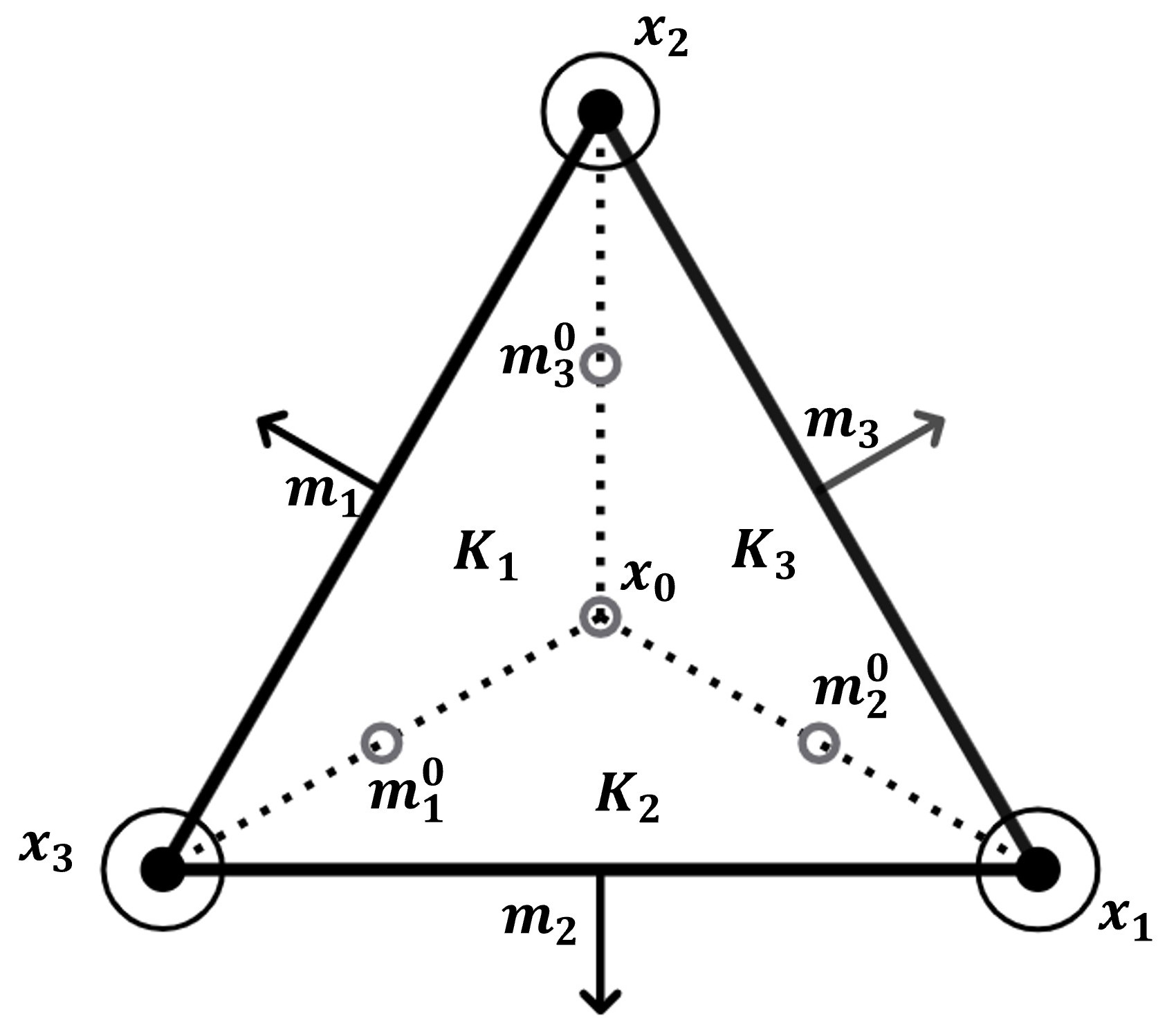}
 	\caption{$\mathcal N_K^V$:
 		``$\bullet$" stands for function value at the point, ``$\bigcirc$" indicates first-order derivative evaluation at the point, ``$\to$" the second normal derivative at the outer mid-edge points.}
 	\label{Fig:CubicH3}
 \end{figure}
 Let $I^V_h$ be the quasi interpolation operator from $H^3_0(\Omega )$ to $V_h$. Then, for any $v\in H^3_0(\Omega)$, the DoFs of $I_h^V v$ are given as:
 \begin{align}
 I^V_hv(\zeta)  & =   v(\zeta )  ,
 	\label{weakinterpolationCubicH31}  \\
 	\nabla I^V_h v(\zeta)  &= \frac {1}{\#\mcal T_{\zeta}} \sum_{K'\in \mcal T_{\zeta}}   (\nabla \Pi_{K'}^3v)(\zeta),
 	\label{weakinterpolationCubicH32}\\
 	\partial_{\bm {nn}} I^V_h  v(\zeta_e)& = \frac {1}{\#\mcal T_{e}} \sum_{K'\in \mcal T_{\zeta}}   \partial_{\bm {nn}} \Pi_{K'}^3v(\zeta_e ) ,
 	\label{weakinterpolationCubicH33}
 \end{align}
 where $\zeta_e$ is the midpoint of $e$, $\zeta\in \mathcal V^i(\mathcal T_h)$, $e_j  \in \mathcal E^{i}(\mcal T_h)$, $K\in \mathcal T_h$, $ \Pi_{K}^3$ is the standard $L^2$ projection onto $\mathbb P^3(K)$ and DoFs \eqref{weakinterpolationCubicH31}-\eqref{weakinterpolationCubicH33} vanish on boundary. Following the ideas for proving Theorem 3.5.3 in \cite{Shi-Wang-2013}, we have
 \begin{lemma}
  \begin{align}\label{eq:interpolationCubicH3}
 	 	|v-I_h^Vv |_m &\lesssim h^{s-m} |v|_s\quad \forall v\in H_0^3(\Omega)\cap H^s(\Omega),\, 0\leq m\leq s,\,s = 3,4.
 	 \end{align}
 \end{lemma}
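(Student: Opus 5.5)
The estimate will follow from the standard local argument for quasi-interpolation operators built from local $L^2$ projections, in the spirit of Theorem 3.5.3 in \cite{Shi-Wang-2013}. The $H^3$ seminorm is understood elementwise, i.e. $|\cdot|_m$ means $(\sum_K|\cdot|_{m,K}^2)^{1/2}$ for $m=3,4$. Since both sides are sums over $K\in\mathcal T_h$, it suffices to prove the local bound
\[
|v-I_h^Vv|_{m,K}\lesssim h_K^{s-m}|v|_{s,\omega_K},
\]
where $\omega_K$ is the union of the vertex patches $\omega_\zeta$ of the vertices of $K$. Summing then gives the result, because shape regularity bounds the overlap of the patches.

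First fix $K$ and let $p\in\mathbb P_{s-1}(\omega_K)$ be the averaged Taylor polynomial of $v$ on $\omega_K$. Since $s-1\le3$, $p$ is cubic. The Bramble--Hilbert lemma on the patch, which is shape regular, gives
\[
|v-p|_{j,\omega_K}\lesssim h_K^{s-j}|v|_{s,\omega_K}, \qquad 0\le j\le s.
\]

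Next I use that $I_h^V$ reproduces cubics locally. If $v|_{\omega_K}=p$ is cubic, then $\Pi_{K'}^3p=p$ on every $K'\subset\omega_K$. The averaged gradients and second normal derivatives therefore equal those of $p$, and function values are exact. The DoFs of $(I_h^Vp)|_K$ thus coincide with those of $p$, and unisolvence of the macro-element (whose space contains $\mathbb P_3(K)$) gives $I_h^Vp|_K=p$. There is one caveat. For $K$ touching $\partial\Omega$, the boundary DoFs are set to zero rather than averaged, so reproduction fails there. I handle this with the usual argument for homogeneous boundary conditions: for boundary vertices and edges, $v\in H_0^3$ gives $v=\nabla v=0$ and $\partial_{\bm{nn}}v=0$ on $\partial\Omega$. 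The boundary DoFs of $I_h^V$ therefore agree with the trace values of $v$ itself, and I bound $|\mathrm{DoF}(p)|$ there by $|\mathrm{DoF}(p-v)|$ using trace/Sobolev inequalities with $s\ge3>2$.

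Then I write
\[
v-I_h^Vv=(v-p)+I_h^V(p-v) \quad\text{on } K.
\]
Using the scaled nodal basis $\{\phi_i\}$ of $V_K$ with $|\phi_i|_{m,K}\lesssim h_K^{1-m+r_i}$, where $r_i$ is the derivative order of the $i$-th DoF, I bound $|I_h^V(p-v)|_{m,K}\lesssim\sum_i|N_i(p-v)|\,h_K^{r_i+1-m}$. Each DoF is controlled through an inverse inequality on the cubic $\Pi_{K'}^3$:
\[
|\nabla^r\Pi^3_{K'}(p-v)(x)|\lesssim h^{-r-1}\|p-v\|_{0,K'},
\]
by stability of the $L^2$ projection. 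Point values use Sobolev embedding, which is valid since $s\ge3$, together with scaling. Combining these with the Bramble--Hilbert bound gives $h_K^{s-m}|v|_{s,\omega_K}$.

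The main obstacle is the unisolvence and scaling of the macro-element basis in \cite{HuZhang2019}. Its internal continuity constraints must be affine-compatible, or at least uniformly controlled under shape regularity, so that the bounds $|\phi_i|_{m,K}\lesssim h_K^{1-m+r_i}$ hold. Here I would invoke the construction in Sect.~3 of \cite{HuZhang2019} and argue by the usual reference-element scaling. The normal-derivative DoFs transform with direction-dependent factors, which I would handle by the standard almost-affine argument.
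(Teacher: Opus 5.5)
Your proof is correct, but it is organized differently from the paper's.

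\textbf{The paper's route.} The paper never builds a polynomial on a patch. On each $K$ it inserts the local projection $\Pi_K^3v$. An inverse inequality and the norm equivalence on $V_K$ then reduce $|\Pi_K^3v-I_h^Vv|_{m,K}$ to the discrepancies between $K$'s own values $\Pi_K^3v(\delta)$, $\nabla\Pi_K^3v(\delta)$, $\partial_{\bm{nn}}\Pi_K^3v(\zeta_e)$ and the DoFs of $I_h^Vv$. These discrepancies are bounded by $|(\Pi_K^3v-v)(\delta)|$ and by the jumps $\llbracket\nabla\Pi^3v\rrbracket$, $\llbracket\partial_{\bm{nn}}\Pi^3v\rrbracket$ on the edges through $\delta$. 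Since $v$ itself has no jumps, the trace inequality and the elementwise error of $\Pi_{K'}^3$ finish the proof.

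\textbf{Your route.} You use a single averaged Taylor polynomial $p$ on $\omega_K$, local reproduction of cubics, and $L^2$-stability of the DoF functionals.

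\textbf{What each buys.} The paper needs polynomial approximation only on single triangles. It also treats boundary vertices and edges automatically through the convention $\llbracket v\rrbracket=v$ on $\partial\Omega$: the vanishing traces of $\nabla v$ and $\partial_{\bm{nn}}v$ turn the boundary ``jumps'' into traces of $\nabla(\Pi^3v-v)$ and $\partial_{\bm{nn}}(\Pi^3v-v)$. Your route needs two extra ingredients: a Bramble--Hilbert estimate on patches that is uniform under shape regularity (standard, but geometrically a bit more delicate), and a separate boundary argument. In exchange, it makes polynomial invariance explicit and avoids edge traces for interior DoFs.

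\textbf{Points to tighten.}
\begin{itemize}
\item In your boundary argument, $\partial_{\bm{nn}}v$ has no point value for $v\in H^3$. At a boundary midpoint you should therefore write $|\partial_{\bm{nn}}p(\zeta_e)|\lesssim |e|^{-1/2}\|\partial_{\bm{nn}}(p-v)\|_{0,e}$ and then apply the trace inequality. This is presumably what your ``trace/Sobolev'' step means.
\item Both proofs rest on the same scaling bounds for the Hu--Zhang basis. The paper takes these for granted under ``the scaling argument''. You rightly flag them as the delicate point, since the $\partial_{\bm{nn}}$ DoFs and the scaled third-derivative constraint are not obviously affine-invariant.
\item Since $V_K$ is only piecewise cubic, the broken seminorms for $m=3,4$ must be taken over the sub-triangles $K_i$ of each macro-element, not over $K$ itself.
\end{itemize}
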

\begin{proof}
 For any $ \bm v\in H_0^3(\Omega)\cap H^s(\Omega)$ with $s = 3,4$, it follows from the scaling argument
\begin{align} \label{eq:interpoNormequi}
	&\quad\; h_K^{2m}| \Pi_{K}^3 v - I^V_h v |_{m,K}^2\lesssim  |  \Pi_{K}^3 v - I_h v |_{0,K}^2
	\notag\\
	&\lesssim h_K^2\sum_{\delta\in \mathcal{V}(K)}\big| \Pi_{K}^3 v  - (I_h  v)|_K\big|^2(\delta) +  h_K^4\sum_{\delta\in \mathcal{V}(K) }\big|\nabla( \Pi_{K}^3 \bm v  - (I_h  v)|_K)\big|^2(\delta)
	\notag\\
	&\quad+ h_K^6 \sum_{\delta\in \mathcal{V}(K)} | 	\partial_{\bm {nn}} \Pi_{K}^3  v  - 	\partial_{\bm {nn}}(I_h  v)|_K |^2  (\delta)
	\notag\\
	&\lesssim h_K^2\sum_{\delta\in \mathcal{V}(K)}\big| \Pi_{K}^3 v  -  v \big|^2(\delta) +  \sum_{\delta\in \mathcal V(K)} \sum_{e\in \mathcal E(\mathcal T_h) ,\delta\in\partial  e}  h_K^3\|\llbracket \nabla  \Pi_{K}^3 v \rrbracket\|_{0,e}^2  \notag\\
	&\quad+ h_K^5  \sum_{\delta\in \mathcal V(K)} \sum_{e\in \mathcal E(\mathcal T_h) ,\delta\in\partial  e}  \|\llbracket   \partial_{\bm n\bm n}  \Pi_{K}^3 v \rrbracket \|_{0,e}^2.
\end{align}
Employing the trace inequality, the Sobolev inequality, \eqref{eq:interpoNormequi} and the error estimates of $\Pi_K^3$,
\begin{align*} %
	&\quad\; h_K^{2m}|  v - I^V_h v |_{m,K}^2 \leq  h_K^{2m}| v-\Pi_{K}^3 v |_{m,K}^2 + h_K^{2m}| \Pi_{K}^3 v - I^V_h v |_{m,K}^2\lesssim \sum_{\delta\in \mathcal V(K)}  h_K^{2s}|v|_{s,\omega_{\delta}}.
\end{align*}
The proof is completed.
\end{proof}

  \subsection{A new Zienkiewicz-type nonconforming element}

 A NZT finite element for the biharmonic equation is proposed in \cite{WangShiXu2007NZT}. For $1\leq i<j\leq 3$, we define
 \begin{align}
 	b_{ij}=\lambda_i^2 \lambda_j-\lambda_i \lambda_j^2
 	+ 3(\frac 23 (\lambda_i - \lambda_j) + \sum_{1\leq k\leq 3\atop k\not=i,k\not=j} \frac{(\nabla \lambda_i - \nabla\lambda_j)^\intercal\nabla\lambda_k}{\|\nabla\lambda_k\|^2}(n\lambda_k -1))b_k.
 \end{align}
 The NZT finite element is defined by $(K, \mathcal P_K, \mcal N_K^{NZT})$ with
 	 \[ \mathcal P_K = P_2(K) + \text{span}\{ b_{ij} | 1 \leq i < j \leq 3\} .\]
  The components of $\mcal N_K$ are shown in Figure \ref{Fig:NZT}.
 \begin{figure}[t]
 	\centering
 	\includegraphics[scale=0.1]{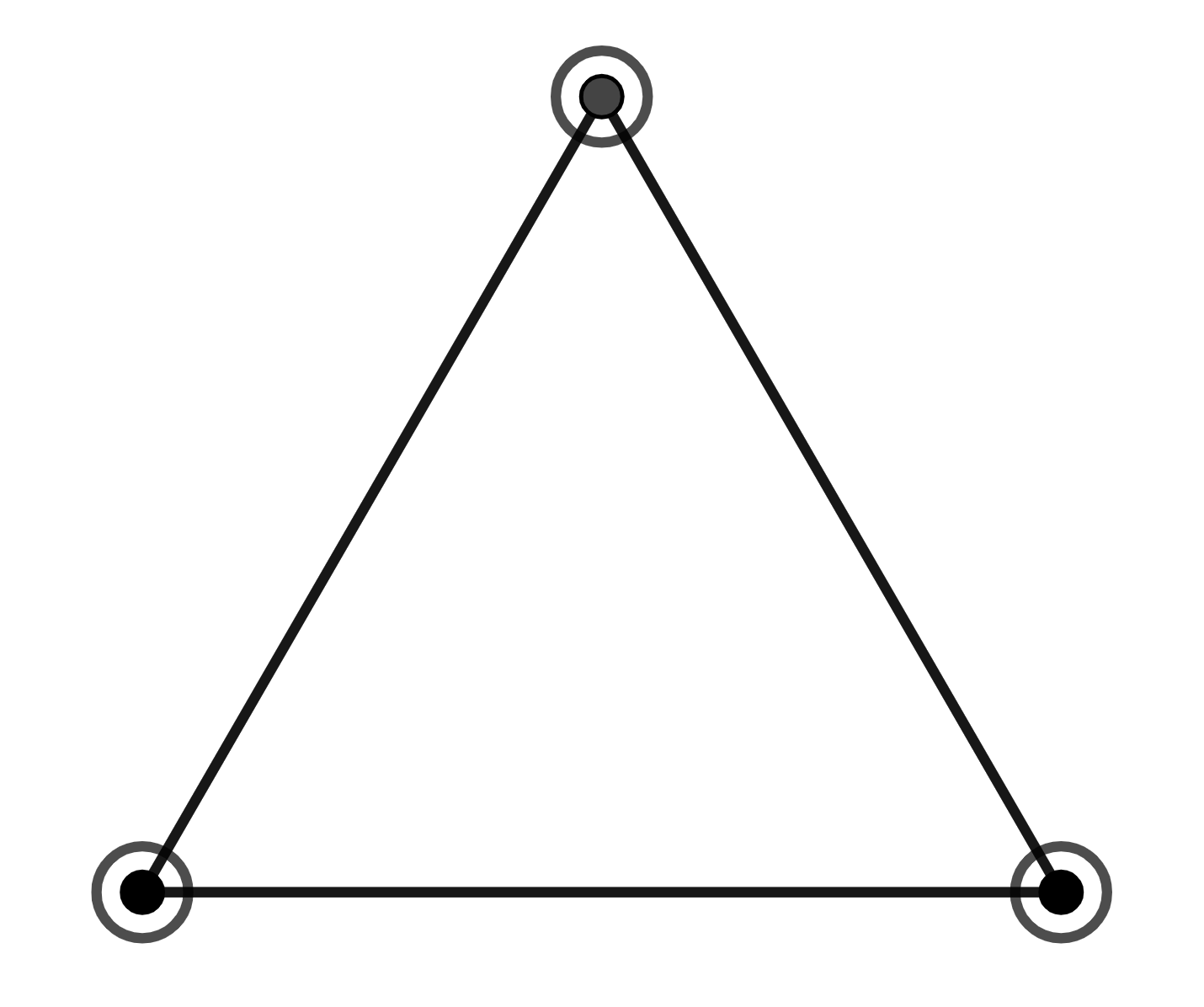}
 	\caption{
 	$\mathcal N_K^{NZT}$:
 	``$\bullet$" stands for function value at the point, ``$\bigcirc$" indicates first-order derivative evaluation at the point.
 	}
 	\label{Fig:NZT}
 \end{figure}
 The global space of NZT element reads as
 \begin{align*}
 	V_h^{NZT} = \{& v\in L^2(\Omega)\,|\,v|_K\in \mathcal P_K , K\in \mcal T_h, \text{ $v$ and $\nabla v$ are continuous at all vertices of}\\
 	& \text{\quad\quad\quad elements in $\mcal T_h$, $v$ and $\nabla v$ vanish at all vertices belonging to $\partial \Omega$}\}.
 \end{align*}
 Obviously, it holds
 \begin{align*}%
 	\int_e [\![\nabla v]\!] = 0   \quad \forall v\in V_h^{NZT},\,e\in\mathcal E(\mathcal T_h).
 \end{align*}
 Let $I_h^{NZT}$ be the quasi interpolation operator from $H^2_0(\Omega )$ to $V_h$. Then, for any $v\in H^2_0(\Omega)$, the DoFs of $I_h^{NZT} v$ are given as:
 \begin{align}
 	I_h^{NZT}v(\zeta)  & = \frac {1}{\#\mcal T_\zeta}\sum_{K'\in \mcal T_{\zeta}}   v(\zeta )  ,
 	\label{weakInterpolation1}  \\
 	\nabla I_h^{NZT} v(\zeta)  &= \frac {1}{\#\mcal T_{\zeta}} \sum_{K'\in \mcal T_{\zeta}}   (\nabla \Pi_{K'}^3v)(\zeta),
 	\label{weakInterpolation2},
 \end{align}
 where $\zeta\in \mathcal V^i(\mathcal T_h)$, $K\in \mathcal T_h$, $ \Pi_{K}^3$ is the standard $L^2$ projection onto $\mathbb P^3(K)$ and DoFs \eqref{weakInterpolation1}-\eqref{weakInterpolation2} vanish on boundary. Following the ideas for proving Theorem 3.5.3 in \cite{Shi-Wang-2013}, we have
 \begin{equation}\label{eq:interpolationNZT}
 	|v-I_h^{NZT} v|_{m,h}\lesssim h ^{s-m}|v|_{s  }\quad\forall \; v\in H^{s}(\Omega)\cap H^2_0(\Omega),\, 0\leq m \leq s ,\, 2\leq s\leq 3.
 \end{equation}

 \subsection{$H^3$ nonconforming finite element method}\label{SectFEM}

 Inspired by the approach in \cite{WangXuHu2006}, we design a robust nonconforming finite element method for a sixth-order elliptic
 singular perturbation problem \eqref{weakForm:GEK} by use of the macro-element $(K,V_K,\mathcal N_K^V)$. It also use the NZT element to discrete the lower part of the bilinear form.

 The discrete form of problem \eqref{weakForm:GEK} reads as: Find $w_h\in V_h$ such that
 \begin{equation}\label{discreteForm:GEK}
 	\iota^2a_h(w_h,v_h) + b_h(I_h^{NZT}w_h,I_h^{NZT}v_h) = (f,I_h^{NZT}v)\quad \forall\;v\in V_h,
 \end{equation}
 where
 \[
 a_h(w,v) =  \sum_{K\in \mcal T_h} (\nabla^3 w, \nabla^3 v)_k \quad\text{and}\quad  b(w ,v) = \sum_{K\in \mcal T_h} (\nabla^2   w , \nabla^2 v)_k.%
 \]
 Define
  \[
 \|v\|_{\iota,h}^2 =	\iota^2a_h(v,v) + b_h(I_h^{NZT}v,I_h^{NZT}v)= \iota^2\sum_{K\in \mcal T_h}|v|_{3,K}^2 + \sum_{K\in \mcal T_h}|I_h^{NZT}v|_{2,K}^2 .
 \]
Easily, we can conclude from the Lax-Milgram lemma \cite{Ciarlet1978FEM} that the discrete problem \eqref{discreteForm:GEK} has a unique solution when $\iota>0$.

 \section{Error analysis}\label{Sect:ErrorAnalysis}
 \begin{lemma}	\label{lemma:interpol}
 	Assume $\Omega$ is a convex polygon. Let $w\in V$ and $w_0\in H^2_0(\Omega)\cap H^3(\Omega)$ be the solutions of problems \eqref{weakForm:GEK} and \eqref{weakForm:KPB}, respectively. Then, it holds
 	\begin{align}
 		\|w - I_h^Vw\|_{\iota,h}&\lesssim \iota^{1/2}\|f\|_0
 		 \label{eq:errorwIhw}\\
 		|w - I_h^{NZT} w|_{2,h}&\lesssim h|w_0|_3 + \iota^{1/2}\|f\|_0.\label{eq:errorwIhNZTw}
 	\end{align}
 \end{lemma}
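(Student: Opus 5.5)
The plan is to prove both estimates directly from the interpolation bounds \eqref{eq:interpolationCubicH3} and \eqref{eq:interpolationNZT}, the regularity estimate \eqref{eq:GEKRegularity}, and the bound \eqref{eq:regularityKPB}. The key device is to split $w$ as $w = w_0 + (w-w_0)$ wherever only $H^2$-level information on $w-w_0$ is available. Throughout we assume $h\lesssim 1$; the argument as planned also needs $h\lesssim \iota^{1/2}$ or a comparable condition, which is flagged in the last paragraph.

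We begin with \eqref{eq:errorwIhNZTw}. By linearity and the triangle inequality,
\begin{align*}
|w - I_h^{NZT} w|_{2,h}\le |w_0 - I_h^{NZT} w_0|_{2,h} + |(w-w_0) - I_h^{NZT}(w-w_0)|_{2,h}.
\end{align*}
For the first term, apply \eqref{eq:interpolationNZT} with $m=2$, $s=3$; this is legitimate because $w_0\in H^3(\Omega)\cap H^2_0(\Omega)$, and it gives $h|w_0|_3$. For the second term, apply \eqref{eq:interpolationNZT} with $m=s=2$; this is the $H^2$-stability of the NZT quasi-interpolant and yields $|w-w_0|_2\lesssim \iota^{1/2}\|f\|_0$ by \eqref{eq:GEKRegularity}. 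Adding the two bounds gives \eqref{eq:errorwIhNZTw}.

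Next we treat \eqref{eq:errorwIhw}. By the definition of $\|\cdot\|_{\iota,h}$, the square of the left-hand side equals $\iota^2|w-I_h^Vw|_{3,h}^2 + |I_h^{NZT}(w-I_h^Vw)|_{2,h}^2$.
\begin{itemize}
\item \emph{First term.} Apply \eqref{eq:interpolationCubicH3} with $m=s=3$ to get $\iota|w-I_h^Vw|_{3,h}\lesssim \iota\|w\|_3\lesssim\iota^{1/2}\|f\|_0$.
\item \emph{Second term, reduction.} First use the broken $H^2$-stability of $I_h^{NZT}$ on piecewise functions. Since $w-I_h^Vw$ is only piecewise smooth, the averaging definition \eqref{weakInterpolation1}--\eqref{weakInterpolation2} together with the scaling argument of the proof of \eqref{eq:interpolationCubicH3} should give $|I_h^{NZT}v|_{2,h}\lesssim |v|_{2,h}+$ jump terms, and the jump terms vanish or are controlled for $v\in H^3_0+V_h$. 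This reduces matters to bounding $|w-I_h^Vw|_{2,h}$.
\item \emph{Second term, splitting.} Insert $w_0$ again. This is delicate because $I_h^V$ is only defined on $H^3_0$ while $w_0\notin H^3_0$. I would therefore use \eqref{eq:interpolationCubicH3} with $m=2$, $s=3$ or $s=4$ directly: $|w-I_h^Vw|_{2,h}\lesssim h|w|_3\lesssim h\iota^{-1/2}\|f\|_0$, or alternatively $\lesssim h^2|w|_4\lesssim h^2\iota^{-3/2}\|f\|_0$.
\end{itemize}

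The main obstacle is the bound on $|I_h^{NZT}(w-I_h^Vw)|_{2,h}$. The direct estimates in the last item give $\iota^{1/2}\|f\|_0$ only when $h\lesssim\iota$, and they are useless when $\iota\ll h$. What I would try first is to localize $I_h^V$ to an $H^2$-stable operator, showing $|w-I_h^Vw|_{2,h}\lesssim |w-w_0|_2 + h|w_0|_3$ by exploiting the fact that the degrees of freedom \eqref{weakinterpolationCubicH31}--\eqref{weakinterpolationCubicH33} use averaged $L^2$ projections, so that $I_h^V$ is $H^2$-stable elementwise up to boundary degrees of freedom. The boundary-vanishing of $\partial_{\bm{nn}}$ at boundary midpoints breaks this for $w_0$. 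That defect lives on boundary elements only, and would be handled by the boundary-layer structure behind \eqref{eq:GEKRegularity}, using $h|w_0|_3$ together with a trace-type bound. If this route fails, the fallback is to interpolate: combine the two direct bounds as $\min(h\iota^{-1/2},\,1)\|f\|_0$ when $h\lesssim\iota$, and in the case $\iota\lesssim h$ use the $H^2$-stability to bound by $|w|_2$. Neither piece of the fallback yields $\iota^{1/2}$ unconditionally, so the stated bound may implicitly require $h\lesssim \iota^{1/2}$ or a comparable condition.
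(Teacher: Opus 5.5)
\textbf{The gap: the lower-order term in the first estimate.} Your proof of \eqref{eq:errorwIhNZTw} is exactly the paper's: split $w=w_0+(w-w_0)$, apply \eqref{eq:interpolationNZT} with $s=3$ to $w_0$ and with $m=s=2$ to $w-w_0\in H^2_0(\Omega)$, then use \eqref{eq:GEKRegularity}. Your bound $\iota|w-I_h^Vw|_{3,h}\lesssim\iota\|w\|_3\lesssim\iota^{1/2}\|f\|_0$ is also fine. The gap is the lower-order part $|I_h^{NZT}(w-I_h^Vw)|_{2,h}$ of \eqref{eq:errorwIhw}. You estimate it through $H^2$-stability of $I_h^{NZT}$ and approximation bounds for $w-I_h^Vw$. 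That route can only produce $h\iota^{-1/2}\|f\|_0$, $h^2\iota^{-3/2}\|f\|_0$, or $|w|_2\lesssim\|f\|_0$. None of these is $O(\iota^{1/2}\|f\|_0)$ when $\iota\ll h$, so your argument does not prove the statement. Your suspicion that an extra mesh condition relating $h$ and $\iota$ is needed is also unfounded.

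\textbf{The missing idea: this term vanishes identically.} The NZT degrees of freedom (vertex values and vertex gradients) are a subset of those of $V_h$, and the two quasi-interpolants assign the shared ones the same values. Indeed $I_h^Vw(\zeta)=w(\zeta)=I_h^{NZT}w(\zeta)$, and both prescribe the gradient at $\zeta$ as the same average $\frac{1}{\#\mathcal T_\zeta}\sum_{K'\in\mathcal T_\zeta}(\nabla\Pi^3_{K'}w)(\zeta)$. $I_h^{NZT}$ applied to a $V_h$ function is fixed by that function's single-valued vertex values and gradients; this is how it must be read in \eqref{discreteForm:GEK}. Hence $I_h^{NZT}I_h^Vw=I_h^{NZT}w$, so $I_h^{NZT}(w-I_h^Vw)=0$ and
\[
\|w-I_h^Vw\|_{\iota,h}=\iota|w-I_h^Vw|_{3,h}\lesssim\iota\|w\|_3\lesssim\iota^{1/2}\|f\|_0 .
\]
This needs no relation between $h$ and $\iota$, and none of your localization or boundary-layer arguments for $I_h^V$. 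Note that the paper's own display bounds this term by $|w|_2$, which would not suffice by itself. The identity stated just before that display is what actually carries the proof.
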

 \begin{proof}
 	Since $\mathcal N_K^{NZT}\subsetneq \mathcal N_K^V$, it follows from the definitions of $I_h^V$ and $I_h^{NZT}$ that
 	\[
 	I_h^{NZT}I_h^Vw = I_h^{NZT} w.
 	\]
 Then, we obtain from \eqref{eq:interpolationCubicH3} and \eqref{eq:GEKRegularity} that
 		\begin{align*}
 		\|w - I_h^Vw\|_{\iota,h}
 		=& \iota |w - I_h^Vw|_{3,h}  +  	|I_h^{NZT}(w - I_h^Vw)|_{2,h}
 		\lesssim  \iota|w|_3  + |w|_2\lesssim \iota^{1/2}\|f\|_0.
 	\end{align*}
 	Next, by \eqref{eq:interpolationNZT} and \eqref{eq:GEKRegularity}, it holds
 	\begin{align*}
 			|w - I_h^{NZT} w|_{2,h} =&	|w_0 -I_h^{NZT} w_0|_{2,h} +	|w -w_0 -  I_h^{NZT} (w-w_0)|_{2,h}
 			\notag\\
 			\lesssim&h|w_0|_3 +|w-w_0|_2
 			\notag\\
 			\lesssim&h|w_0|_3 + \iota^{1/2}\|f\|_0.
 	\end{align*}
 \end{proof}
 \begin{lemma}\label{lm:bhf}
 		Under the assumptions of Lemma \ref{lemma:interpol}, it follows
 	\begin{align}\label{bhfrobust}
		\iota^2a_h(  w,v_h) +	b_h(w  , I_h^{NZT}  v_h)  -(f,I_h^{NZT} v_h)   \lesssim& (\iota^{1/2} \|f\|_0  + h|w_0|_{3})\|v_h\|_{\iota,h}\quad  v_h\in V_h  .%
 	\end{align}
 \end{lemma}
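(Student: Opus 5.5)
Our plan is to use the strong form of the equation and integrate by parts elementwise. The key step is to write $\iota^2 a_h(w,v_h)$ plus the $b_h$ term as a \emph{consistency term} plus a \emph{mismatch term}. The consistency term is handled by the weak continuity of $V_h$. The mismatch term is controlled by the robust regularity bound \eqref{eq:GEKRegularity} and the splitting $w = w_0 + (w-w_0)$.

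\textbf{Step 1: the $a_h$ part.} Since $w\in H^4(\Omega)$ by Theorem~\ref{th:GEKRegularity}, $\Delta^3 w$ is understood distributionally, and $f = \Delta^2 w - \iota^2\Delta^3 w$. Integrating $(\nabla^3 w,\nabla^3 v_h)_K$ by parts once gives
\[
\iota^2 a_h(w,v_h) = -\iota^2\sum_K(\nabla\cdot\nabla^3 w,\nabla^2 v_h)_K + \iota^2\sum_{e}\int_e \nabla^3 w\,\bm n:[\![\nabla^2 v_h]\!].
\]
For the edge term we use $\int_e[\![\nabla^2 v_h]\!]=0$ and subtract the edge mean of $\nabla^3 w$. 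Standard trace and approximation estimates then bound it by $\iota^2 h|w|_4|v_h|_{3,h}$. By \eqref{eq:GEKRegularity} this is at most
\[
\iota h\cdot\iota^{-1/2}\|f\|_0\cdot \iota|v_h|_{3,h}\lesssim \iota^{1/2}\|f\|_0\|v_h\|_{\iota,h},
\]
using $h\le 1$. Alternatively, we can bound it crudely as $\iota^2\|w\|_3|v_h|_{3,h}$, which already gives $\iota\cdot\iota^{1/2}\|f\|_0\cdot\iota|v_h|_{3,h}$.

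In fact we will not integrate by parts at all. The cleanest route is to bound $\iota^2 a_h(w,v_h)\le \iota\|w\|_3\cdot \iota|v_h|_{3,h}\lesssim \iota^{1/2}\|f\|_0\|v_h\|_{\iota,h}$ directly via \eqref{eq:GEKRegularity}.

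\textbf{Step 2: the remaining terms.} It remains to treat $b_h(w,I_h^{NZT}v_h)-(f,I_h^{NZT}v_h)$. Split $w=w_0+(w-w_0)$. The piece $b_h(w-w_0,I_h^{NZT}v_h)$ is bounded by $|w-w_0|_2\,|I_h^{NZT}v_h|_{2,h}\lesssim\iota^{1/2}\|f\|_0\|v_h\|_{\iota,h}$. What is left is the classical NZT consistency error
\[
b_h(w_0,\phi_h)-(f,\phi_h),\qquad \phi_h=I_h^{NZT}v_h\in V_h^{NZT}.
\]
Since $\Delta^2 w_0=f$ and $w_0\in H^3$, elementwise integration by parts gives
\[
b_h(w_0,\phi_h)-(f,\phi_h)=\sum_e\int_e \nabla^2 w_0\,\bm n\cdot[\![\nabla\phi_h]\!] - \sum_K\int_{\partial K}\partial_{\bm n}\Delta w_0\,\phi_h .
\]

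\textbf{Step 3: the NZT consistency estimate (main obstacle).} The first edge term is handled by $\int_e[\![\nabla\phi_h]\!]=0$ and subtraction of edge means, giving $h|w_0|_3|\phi_h|_{2,h}$. The second term is the delicate one. We would avoid it by not integrating by parts the second time: write $(f,\phi_h)=(f,\phi_h-\Pi\phi_h)+(f,\Pi\phi_h)$, where $\Pi\phi_h$ is a conforming ($H^2_0$, e.g.\ HCT-type enriching) companion of $\phi_h$. Then $(f,\Pi\phi_h)=b(w_0,\Pi\phi_h)$, and the error becomes
\[
b_h(w_0,\phi_h-\Pi\phi_h)-(f,\phi_h-\Pi\phi_h).
\]
Using $\|\phi_h-\Pi\phi_h\|_0\lesssim h^2|\phi_h|_{2,h}$ together with the first-step edge argument applied to $\phi_h-\Pi\phi_h$, whose gradient jumps have zero mean, this is bounded by $h(|w_0|_3+\|f\|_0)|\phi_h|_{2,h}\lesssim h\|f\|_0\|v_h\|_{\iota,h}$ by \eqref{eq:regularityKPB}. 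This matches the claimed $h|w_0|_3$ up to the equivalent $\|f\|_0$. Alternatively, we would cite the known consistency estimate for the NZT element from \cite{WangShiXu2007NZT}, which gives exactly $h|w_0|_3|\phi_h|_{2,h}$.

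Finally, $|\phi_h|_{2,h}\le\|v_h\|_{\iota,h}$ by definition, and collecting Steps 1–3 yields \eqref{bhfrobust}.
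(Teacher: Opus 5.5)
Your argument is correct in outline, and Steps 1--2 agree with the paper: Cauchy--Schwarz with \eqref{eq:GEKRegularity} for $\iota^2a_h(w,v_h)$, and $b_h(w-w_0,\phi_h)\le|w-w_0|_2|\phi_h|_{2,h}$. The NZT term is where you genuinely differ. The paper never estimates the consistency error $b_h(w_0,\phi_h)-(f,\phi_h)$ directly. Instead it introduces the discrete NZT solution $w_{h0}$ of \eqref{eq:NZTKPB} and uses $\phi_h=I_h^{NZT}v_h\in V_h^{NZT}$ to replace $(f,\phi_h)$ by $b_h(w_{h0},\phi_h)$. It then bounds $b_h(w_0-w_{h0},\phi_h)$ by the cited energy estimate $|w_0-w_{h0}|_{2,h}\lesssim h|w_0|_3$. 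That is a two-line trick, but it treats the whole NZT convergence theorem (itself proved from a consistency bound) as a black box. Your route attacks the consistency error directly and is more transparent.

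Three points need correcting.

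First, your enriching-operator variant only gives $h\|f\|_0$, and $\|f\|_0$ is not equivalent to $|w_0|_3$. By \eqref{eq:regularityKPB} you have $|w_0|_3\lesssim\|f\|_0$, but not the converse. So that version proves a weaker inequality than \eqref{bhfrobust}, and as written you need the citation to reach $h|w_0|_3$.

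Second, the term you call delicate is in fact zero, and seeing this gives exactly the claimed bound with no enriching operator and no citation. In 2D the bubble $b_k=\lambda_1\lambda_2\lambda_3$ vanishes on $\partial K$, so $\mathcal P_K|_e=P_3(e)$. A cubic on an edge is fixed by the vertex values and tangential derivatives, which are shared (and vanish on $\partial\Omega$), so $V_h^{NZT}\subset H^1_0(\Omega)$. Since $\nabla\Delta w_0\in L^2$ with divergence $f\in L^2$, you get $(f,\phi_h)=-(\nabla\Delta w_0,\nabla\phi_h)$. One elementwise integration by parts then yields
\[ b_h(w_0,\phi_h)-(f,\phi_h)=\sum_{e}\int_e(\nabla^2w_0\,\bm n_e)\cdot[\![\nabla\phi_h]\!]\lesssim h|w_0|_3\,|\phi_h|_{2,h}, \]
by the mean-zero property of $[\![\nabla\phi_h]\!]$ and the usual trace/Poincar\'e argument.

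Third, delete the abandoned integration by parts in Step 1. Since $\|w\|_4\lesssim\iota^{-3/2}\|f\|_0$, the edge bound $\iota^2h|w|_4|v_h|_{3,h}$ is only $h\iota^{-1/2}\|f\|_0\cdot\iota|v_h|_{3,h}$, which is not robust. Moreover, its volume term involves $\nabla^2 v_h$ rather than $\nabla^2 I_h^{NZT}v_h$, so it would not combine with $b_h$. Your display of the crude bound also carries a spurious extra factor $\iota$. All you need is the direct estimate $\iota^2a_h(w,v_h)\le\iota\|w\|_3\cdot\iota|v_h|_{3,h}\lesssim\iota^{1/2}\|f\|_0\|v_h\|_{\iota,h}$, which is what the paper uses.
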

 \begin{proof}
 	Assume $w_{h0}$ is the solution of the following problem:  Find $ w_{h0}\in V_h^{NZT}$ such that
 	\begin{align} \label{eq:NZTKPB}
 		b_h(  w_{h0} ,    v_h) = (f,  v_h)\quad \forall v_h \in V_h^{NZT}.
 	\end{align}
 	Moreover, we have from Theorem 2 in \cite{WangShiXu2007NZT} that
 	\begin{align*}%
 		|w_0 -  w_{h0}|_2 \lesssim h|w_0|_3 .
 	\end{align*}
Then,
 	\begin{align}\label{eq:errorbhf}
 		b_h(w  , I_h^{NZT}  v_h)  -(f,I_h^{NZT} v_h)
 		=& b_h(  w  , I_h^{NZT} v_h) -b_h(w_{h0} , I_h^{NZT} v_h)\notag\\
 		=& b_h(  w - w_{0} ,I_h^{NZT} v_h)-b_h(  w_0 - w_{h0} ,I_h^{NZT} v_h)\notag\\
 		\lesssim& (| w - w_{0}|_2 +h|w_0|_3  )\|v_h\|_{\iota,h} .
 	\end{align}
 		Finally, combining \eqref{eq:errorbhf}, the Cauchy-Schwarz inequality and \eqref{eq:GEKRegularity}, we can obtain the estimate \eqref{bhfrobust}.
 \end{proof}
 	\begin{lemma}\label{lm:ahbh}
 		Under the assumptions of Lemma \ref{lemma:interpol}, it holds that for all $v_h \in V_h$
 		\begin{align*}
			\iota^2a_h( I_h^Vw-w,v_h) + b_h(I_h^{NZT}(I_h^V w-w)  , I_h^{NZT} v_h)\lesssim( \iota^{1/2} \|f\|_0+ h|w_0|_{3})\|v_h\|_{\iota,h}%
 		\end{align*}
 	\end{lemma}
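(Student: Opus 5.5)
The plan is to bound the two terms on the left-hand side separately, using the Cauchy--Schwarz inequality together with the definition of $\|\cdot\|_{\iota,h}$.

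\textbf{First term.} By Cauchy--Schwarz,
\[
\iota^2 a_h(I_h^Vw-w,v_h)\le \iota|I_h^Vw-w|_{3,h}\,\iota|v_h|_{3,h}\le \iota|I_h^Vw-w|_{3,h}\,\|v_h\|_{\iota,h}.
\]
Applying \eqref{eq:interpolationCubicH3} with $m=s=3$ gives $|w-I_h^Vw|_{3,h}\lesssim|w|_3$. Then \eqref{eq:GEKRegularity} gives $\iota|w|_3\lesssim\iota^{1/2}\|f\|_0$. This is exactly the content of \eqref{eq:errorwIhw}, so the first term is bounded by $\iota^{1/2}\|f\|_0\|v_h\|_{\iota,h}$.

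\textbf{Second term.} The key observation, already used in the proof of Lemma \ref{lemma:interpol}, is that $I_h^{NZT}I_h^Vw=I_h^{NZT}w$. This holds because the NZT degrees of freedom (vertex values and vertex gradients, averaged through $\Pi_{K'}^3$) are a subset of those of $V_h$, and $I_h^V$ reproduces them. Hence
\[
I_h^{NZT}(I_h^Vw-w)=I_h^{NZT}w-I_h^{NZT}w=0
\]
on the interpolated level. Here $I_h^{NZT}w$ must be understood with $I_h^{NZT}$ extended to $H^3_0$, which is consistent since $I_h^{NZT}$ acts on $w\in H^2_0$. The $b_h$ term therefore vanishes identically.

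\textbf{Main obstacle.} I expect the main difficulty to be that $I_h^{NZT}$ is applied to $w-I_h^Vw$, and $I_h^Vw$ is only piecewise smooth. One must check that the definition of $I_h^{NZT}$ on $V_h+H^3_0$ is meaningful and linear, using vertex values and the averaged gradients of $\Pi^3_{K'}$. On $V_h$, the projection $\Pi_{K'}^3$ of a macro-element piecewise cubic is not the function itself, so one must confirm that the vertex gradients match. I would first try to argue directly from the DoF definitions \eqref{weakinterpolationCubicH31}--\eqref{weakinterpolationCubicH32} versus \eqref{weakInterpolation1}--\eqref{weakInterpolation2}.

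\textbf{Fallback.} If exact cancellation fails, I would bound the term robustly via the triangle inequality:
\[
|I_h^{NZT}(I_h^Vw-w)|_{2,h}\le |I_h^{NZT}I_h^Vw-I_h^Vw|_{2,h}+|I_h^Vw-w|_{2,h}+|w-I_h^{NZT}w|_{2,h}.
\]
The last piece is controlled by \eqref{eq:errorwIhNZTw}, giving $h|w_0|_3+\iota^{1/2}\|f\|_0$. For the middle piece, I would split $w=w_0+(w-w_0)$. Using \eqref{eq:interpolationCubicH3}, or rather the stability of $I_h^V$ in $H^2$ together with approximation of $w_0\in H^3$ (via an $s=3$, $m=2$ estimate), this yields $h|w_0|_3+|w-w_0|_2\lesssim h|w_0|_3+\iota^{1/2}\|f\|_0$. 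The first piece is handled similarly, using local stability of $I_h^{NZT}$ on the nonconforming $V_h$ functions through the scaling argument of \eqref{eq:interpoNormequi}.

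\textbf{Conclusion.} Combining the two bounds with Cauchy--Schwarz yields the claimed estimate.
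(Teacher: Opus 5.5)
Your proof is correct for the statement as written, but it treats the $b_h$ term differently from the paper.

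\textbf{Your route.} You use $I_h^{NZT}I_h^Vw=I_h^{NZT}w$, the same identity the paper invokes in proving Lemma~\ref{lemma:interpol}, to get $I_h^{NZT}(I_h^Vw-w)=0$. Only $\iota^2a_h(I_h^Vw-w,v_h)$ then survives, and Cauchy--Schwarz with \eqref{eq:errorwIhw} gives the sharper bound $\iota^{1/2}\|f\|_0\|v_h\|_{\iota,h}$, with no $h|w_0|_3$ term at all.

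\textbf{The paper's route.} The paper's proof writes the second term as $b_h(I_h^{NZT}I_h^Vw-w,I_h^{NZT}v_h)$. By the same identity this equals $b_h(I_h^{NZT}w-w,I_h^{NZT}v_h)$, which does not vanish. The paper bounds the whole expression by $(\|w-I_h^Vw\|_{\iota,h}+|w-I_h^{NZT}w|_{2,h})\|v_h\|_{\iota,h}$ and applies both estimates of Lemma~\ref{lemma:interpol}; that is where $h|w_0|_3$ enters.

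\textbf{What the paper's route buys.} It is exactly what Theorem~\ref{convergence} needs. The decomposition \eqref{eq:Ihwwh} produces $b_h(I_h^{NZT}w,I_h^{NZT}v_h)$, whereas Lemma~\ref{lm:bhf} controls $b_h(w,I_h^{NZT}v_h)$. The discrepancy $b_h(I_h^{NZT}w-w,I_h^{NZT}v_h)$ is silently absorbed by the paper's version of this lemma. If you use your literal, sharper version, you must estimate that leftover separately by Cauchy--Schwarz and \eqref{eq:errorwIhNZTw}, which reproduces the paper's argument.

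\textbf{On the identity.} Your concern about how $I_h^{NZT}$ acts on $V_h$ is well placed. The identity holds only if $I_h^{NZT}$ is defined on $V_h$ by copying the vertex values and vertex gradients of the $V_h$ function; that is the content of the paper's remark $\mcal N_K^{NZT}\subsetneq\mcal N_K^V$. It fails if one literally applies the $\Pi_{K'}^3$-averaging formula to a macro-element piecewise cubic. The paper adopts the first interpretation without comment.

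\textbf{On the fallback.} It is unnecessary, and its middle step would not work as sketched. In general $\partial_{\bm{nn}}w_0\neq0$ on $\partial\Omega$, so $w_0\notin H^3_0(\Omega)$ and \eqref{eq:interpolationCubicH3} does not apply to $I_h^Vw_0$. Zeroing the boundary $\partial_{\bm{nn}}$ degrees of freedom costs order $h^{1/2}$, not $h$, in $|\cdot|_{2,h}$.
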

 	\begin{proof}
 		Easily,
 	\begin{align}\label{eq:ahbh0}
 		&	\iota^2a_h( I_h^Vw-w,v_h) + b_h(I_h^{NZT}I_h^V w-w  , I_h^{NZT} v_h) \notag\\
 		\lesssim&  (\|w - I_h^Vw\|_{\iota,h}  + |w-I_h^{NZT}w|_{2,h}) \|v_h\|_{\iota,3,h}.
 	\end{align}
 	which along with Lemma \ref{lemma:interpol} and \eqref{eq:GEKRegularity} implies the desired result.%
 	\end{proof}
 	\begin{theorem}\label{convergence}
 			Assume $\Omega$ is a convex polygon. Let $w\in V$, $w_h\in V_h$ and $w_0\in H^3(\Omega)\cap H^2_0(\Omega)$ be the solutions of problems \eqref{weakForm:GEK}, \eqref{discreteForm:GEK} and \eqref{weakForm:KPB}, respectively. Then it holds
 		\begin{align}\label{eq:error}
 			\|w - w_h\|_{\iota,h}+\| I_h^Vw - w_h\|_{\iota,h}\lesssim \iota^{1/2} \|f\|_0  + h|w_0|_{3} .
 		\end{align}
 	\end{theorem}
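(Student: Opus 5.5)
The argument is the usual Strang-type splitting through the interpolant $I_h^Vw$, combined with the triangle inequality. Put $\eta_h := I_h^Vw - w_h\in V_h$. Since $\|\cdot\|_{\iota,h}^2$ is exactly the discrete bilinear form $\iota^2a_h(\cdot,\cdot)+b_h(I_h^{NZT}\cdot,I_h^{NZT}\cdot)$ evaluated on the diagonal, we get
\[
\|\eta_h\|_{\iota,h}^2 = \iota^2a_h(I_h^Vw-w_h,\eta_h) + b_h(I_h^{NZT}(I_h^Vw-w_h), I_h^{NZT}\eta_h).
\]
The discrete problem \eqref{discreteForm:GEK} with test function $v_h=\eta_h$ allows us to replace the $w_h$ terms by $(f,I_h^{NZT}\eta_h)$.

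Next insert $w$ and split the right-hand side into two groups. The first group is
\[
\iota^2a_h(I_h^Vw-w,\eta_h)+b_h(I_h^{NZT}(I_h^Vw-w),I_h^{NZT}\eta_h),
\]
which is exactly the quantity bounded in Lemma~\ref{lm:ahbh}. The second group is
\[
\iota^2a_h(w,\eta_h)+b_h(I_h^{NZT}w,I_h^{NZT}\eta_h)-(f,I_h^{NZT}\eta_h).
\]
To handle it, rewrite $b_h(I_h^{NZT}w,\cdot)=b_h(w,\cdot)-b_h(w-I_h^{NZT}w,\cdot)$. The consistency part $\iota^2a_h(w,\eta_h)+b_h(w,I_h^{NZT}\eta_h)-(f,I_h^{NZT}\eta_h)$ is controlled by Lemma~\ref{lm:bhf}. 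The remainder $b_h(w-I_h^{NZT}w,I_h^{NZT}\eta_h)$ is estimated by Cauchy--Schwarz together with \eqref{eq:errorwIhNZTw}, giving $(h|w_0|_3+\iota^{1/2}\|f\|_0)\|\eta_h\|_{\iota,h}$. Collecting these bounds and dividing by $\|\eta_h\|_{\iota,h}$ yields
\[
\|I_h^Vw-w_h\|_{\iota,h}\lesssim \iota^{1/2}\|f\|_0+h|w_0|_3 .
\]

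For the first term in \eqref{eq:error}, use the triangle inequality $\|w-w_h\|_{\iota,h}\le \|w-I_h^Vw\|_{\iota,h}+\|I_h^Vw-w_h\|_{\iota,h}$. This is legitimate because $\|\cdot\|_{\iota,h}$ is a seminorm on $V+V_h$, as $I_h^{NZT}$ is linear and defined on $w$. The first piece is bounded by \eqref{eq:errorwIhw}.

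The main point needing care is the correct bookkeeping in the second group: the discrete form contains $I_h^{NZT}w$, whereas Lemma~\ref{lm:bhf} involves $w$ itself, and the difference must be absorbed via \eqref{eq:errorwIhNZTw}. One should also check that the identity $I_h^{NZT}I_h^V=I_h^{NZT}$ used in Lemma~\ref{lemma:interpol} applies, so that $I_h^{NZT}(I_h^Vw-w)$ appearing in Lemma~\ref{lm:ahbh} is consistent with the expression produced by the splitting.
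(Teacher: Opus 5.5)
Your proof is correct and follows the paper's argument: test with $I_h^Vw-w_h$, use the discrete equation, split into the interpolation part (Lemma~\ref{lm:ahbh}) and the consistency part (Lemma~\ref{lm:bhf}), then finish with the triangle inequality and \eqref{eq:errorwIhw}. Your extra step bounding $b_h(w-I_h^{NZT}w,I_h^{NZT}\eta_h)$ via Cauchy--Schwarz and \eqref{eq:errorwIhNZTw} is genuinely needed, because the paper applies Lemma~\ref{lm:bhf} (stated with $b_h(w,I_h^{NZT}v_h)$) to a term containing $b_h(I_h^{NZT}w,I_h^{NZT}v_h)$ without comment; also, your splitting uses only the linearity of $I_h^{NZT}$, not the identity $I_h^{NZT}I_h^V=I_h^{NZT}$.
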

 	\begin{proof}
 		Let $v_h = I_h^Vw-w_h \in V_h$. We have
 		\begin{align}\label{eq:Ihwwh}
 			\| I_h^Vw - w_h\|_{\iota,h}^2= &   \iota^2a_h( I_h^Vw -w_h,v_h) + b_h(I_h^{NZT}I_h^V w -I_h^{NZT}w_h ,I_h^{NZT}v_h) \notag\\
 			= &   -(f,I_h^{NZT} v_h) +\iota^2a_h( I_h^Vw,v_h) + b_h(I_h^{NZT}I_h^V w  , I_h^{NZT} v_h)  \notag\\
 			= &  -(f,I_h^{NZT} v_h) +\iota^2 a_h(w,v_h) + b_h(I_h^{NZT}w  , I_h^{NZT}  v_h)  \notag\\
 			&+\iota^2a_h( I_h^Vw-w,v_h) + b_h(I_h^{NZT}(I_h^V w-w)  , I_h^{NZT} v_h) .
 		\end{align}
 		Employing  Lemma \ref{lm:bhf} and Lemma \ref{lm:ahbh}, we have from \eqref{eq:Ihwwh} that
 		\[
 		\| I_h^Vw - w_h\|_{\iota,h}\lesssim \iota^{1/2} \|f\|_0 + h|w_0|_{3}  .
 		\]
 Lastly, by combining the triangle inequality, \eqref{eq:errorwIhw} and the previous bound, we establish the desired result.
 	\end{proof}
 	According to Theorem \ref{convergence}, \eqref{eq:errorwIhNZTw}, \eqref{eq:regularityKPB} and \eqref{eq:GEKRegularity}, we can easily derive the following corollary.
 	\begin{corollary}
 			Under the assumptions of Theorem \ref{convergence}, we have
 		\begin{align*}%
 			\|w_0 - w_h\|_{\iota,h}\leq \iota^{1/2}\|f\|_0   + h|w_0|_3.
 		\end{align*}
 	\end{corollary}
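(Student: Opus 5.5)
The corollary is the final statement, $\|w_0 - w_h\|_{\iota,h}\lesssim \iota^{1/2}\|f\|_0 + h|w_0|_3$. I would obtain it from a triangle inequality through the continuous solution $w$:
\[
\|w_0 - w_h\|_{\iota,h}\le \|w_0 - w\|_{\iota,h} + \|w - w_h\|_{\iota,h}.
\]
The second term is bounded by $\iota^{1/2}\|f\|_0 + h|w_0|_3$ directly from Theorem \ref{convergence}. The work is therefore in the first term.

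The norm $\|\cdot\|_{\iota,h}$ is defined through $I_h^{NZT}$, so I read $\|w_0-w\|_{\iota,h}$ as the broken norm whose two parts are $\iota\,|w_0-w|_{3,h}$ and a second-order part. For $I_h^{NZT}$ to make sense on $w_0$, the second-order part should be the broken $H^2$ seminorm, or $|I_h^{NZT}(w_0-w)|_{2,h}$ controlled by it. Either way I would estimate the two parts as follows.

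\textbf{Lower-order part.} By \eqref{eq:interpolationNZT} with $s=m=2$, $|I_h^{NZT}v|_{2,h}\lesssim |v|_2$. Then \eqref{eq:GEKRegularity} gives
\[
|I_h^{NZT}(w-w_0)|_{2,h}\lesssim |w-w_0|_2 \lesssim \iota^{1/2}\|f\|_0 .
\]
Alternatively, I can split it as $|w-I_h^{NZT}w|_{2,h} + |w_0 - I_h^{NZT}w_0|_{2,h} + |w-w_0|_2$. Here \eqref{eq:errorwIhNZTw} handles the first piece, and \eqref{eq:interpolationNZT} with $s=3$ gives $h|w_0|_3$ for the second.

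\textbf{$\iota$-weighted third-order part.} Here $\iota|w|_3\lesssim\iota^{1/2}\|f\|_0$ by \eqref{eq:GEKRegularity}. For $\iota|w_0|_3$, the regularity estimate \eqref{eq:regularityKPB} gives $\iota|w_0|_3\lesssim \iota\|f\|_0\le \iota^{1/2}\|f\|_0$, since $\iota<1$. Summing the two parts gives the claim.

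\textbf{Main obstacle.} The delicate point is that $w_0\notin V_h$, so $\|w_0-w_h\|_{\iota,h}$ has to be interpreted with the broken norm. The step needing care is checking that the stability $|I_h^{NZT}v|_{2,h}\lesssim|v|_2$ holds for $v=w-w_0\in H^2_0(\Omega)$; this is the $m=s=2$ case of \eqref{eq:interpolationNZT}. If the intended norm instead uses the broken $H^2$ seminorm directly in its second part, that step simply disappears.
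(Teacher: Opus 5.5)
Your proposal is correct and is essentially the argument the paper indicates without writing it out: a triangle inequality through $w$, Theorem~\ref{convergence} for $\|w-w_h\|_{\iota,h}$, and \eqref{eq:GEKRegularity}, \eqref{eq:regularityKPB} together with the $I_h^{NZT}$ estimates for $\|w_0-w\|_{\iota,h}$. Your alternative split cites \eqref{eq:errorwIhNZTw}, which is the result the paper names, and the stability bound from \eqref{eq:interpolationNZT} does the same job. Reading the statement's $\le$ as $\lesssim$, as you did, is the right interpretation.
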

 	\begin{corollary}\label{medius}
  	Let $w\in V$ and $w_h\in V_h$ be the solutions of problems \eqref{weakForm:GEK} and \eqref{discreteForm:GEK}, respectively. Then we have
 		\begin{equation*}
 			\|w-w_h\|_{\iota,h}\to 0,\quad h\to 0.
 		\end{equation*}
 		\end{corollary}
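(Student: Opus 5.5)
The estimate of Theorem \ref{convergence} does not give the claim directly: for fixed $\iota$ its right-hand side $\iota^{1/2}\|f\|_0+h|w_0|_3$ does not tend to zero as $h\to0$. I would therefore keep $\iota\in(0,1)$ fixed, allow the constants below to depend on $\iota$, and combine a stability bound with a density argument in the data $f$. I work under the standing setting of Theorem \ref{convergence} (convex $\Omega$), so that Theorem \ref{th:GEKRegularity} is available. The plan has three parts.

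\textbf{Step 1 (uniform stability).} Taking $v_h=w_h$ in \eqref{discreteForm:GEK} gives
\[
\|w_h\|_{\iota,h}^2=(f,I_h^{NZT}w_h)\le \|f\|_0\,\|I_h^{NZT}w_h\|_0 .
\]
The NZT space satisfies a discrete Poincar\'e--Friedrichs inequality $\|v\|_0\lesssim |v|_{2,h}$ on $V_h^{NZT}$, because of the vanishing vertex values and gradients and $\int_e[\![\nabla v]\!]=0$. Hence $\|w_h\|_{\iota,h}\lesssim \|f\|_0$, uniformly in $h$. For the continuous problem I will use $\|w\|_3\lesssim \iota^{-2}\|f\|_0$. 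I then extend $\|\cdot\|_{\iota,h}$ to $V+V_h$ by the same formula. The quantity $|I_h^{NZT}w|_{2,h}$ is bounded by $|w|_2$, by \eqref{eq:interpolationNZT} with $s=m=2$. So $\|w\|_{\iota,h}\lesssim \|w\|_3$, and this extended norm is bounded on $V$. Everything here is linear in $f$.

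\textbf{Step 2 (splitting by density).} Given $\epsilon>0$, choose $f_\epsilon\in C_0^\infty(\Omega)$ with $\|f-f_\epsilon\|_0<\epsilon$. Let $w^\epsilon$ and $w_h^\epsilon$ denote the continuous and discrete solutions with data $f_\epsilon$. Linearity and Step 1 give
\[
\|w-w_h\|_{\iota,h}\le \|w-w^\epsilon\|_{\iota,h}+\|w^\epsilon-w_h^\epsilon\|_{\iota,h}+\|w_h^\epsilon-w_h\|_{\iota,h}\le C_\iota\,\epsilon+\|w^\epsilon-w_h^\epsilon\|_{\iota,h}.
\]
It therefore suffices to show that the middle term tends to zero for fixed smooth data. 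By Theorem \ref{th:GEKRegularity}, $w^\epsilon\in H^4(\Omega)$ with $\|w^\epsilon\|_4\lesssim \iota^{-2}\|f_\epsilon\|_0$.

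\textbf{Step 3 (fixed-$\iota$ Strang estimate).} I follow the proof of Theorem \ref{convergence}, testing with $v_h=I_h^Vw^\epsilon-w_h^\epsilon$, but bound each term by powers of $h$ rather than by $\iota^{1/2}$.
\begin{itemize}
\item The approximation terms are handled by \eqref{eq:interpolationCubicH3} with $s=4$ and by \eqref{eq:interpolationNZT} with $s=3$. Together with the identity $I_h^{NZT}I_h^V=I_h^{NZT}$, this gives $\|w^\epsilon-I_h^Vw^\epsilon\|_{\iota,h}\lesssim h(\|w^\epsilon\|_4+\|w^\epsilon\|_3)$.
\item The lower-order consistency term is estimated as in Lemma \ref{lm:bhf}, but with $w$ in place of $w_0$. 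Using the NZT estimate $|w-w_{h}|_{2,h}\lesssim h|w|_3$ (where $w_{h}$ is the NZT Galerkin approximation of $w$), it is $O(h\|w^\epsilon\|_3)$.
\item The remaining term is $\iota^2a_h(w^\epsilon,v_h)-\iota^2(\Delta^3 w^\epsilon,\cdot)$. Here $\iota^2\Delta^3 w^\epsilon=\Delta^2 w^\epsilon-f_\epsilon$ has been tested against $I_h^{NZT}v_h$ rather than $v_h$. I would integrate by parts elementwise. The resulting edge terms $\int_e\nabla^3w^\epsilon\colon[\![\nabla^2 v_h]\!]$ are bounded by $h|w^\epsilon|_4|v_h|_{3,h}$, using $\int_e[\![\nabla^2v_h]\!]=0$, as in \cite{HuZhang2019}.
\item The mismatch $(\Delta^3 w^\epsilon, v_h-I_h^{NZT}v_h)$ must also be controlled. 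I would bound it by $\|\Delta^3w^\epsilon\|_{0}\,h\,|v_h|_{3,h}$-type estimates. If $\Delta^3 w^\epsilon\notin L^2$, I would instead move one derivative back onto $v_h-I_h^{NZT}v_h$, using the approximation of $I_h^{NZT}$ on each element.
\end{itemize}

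The main obstacle is this last mismatch term. It arises because the test functions in the lower-order part of the scheme are $I_h^{NZT}v_h$ rather than $v_h$. If the direct bound fails, a fallback is to note that $w^\epsilon$ is smooth on compact subsets and use a further density argument in $C_0^\infty$ to replace $w^\epsilon$ by a smooth function. Once the middle term is shown to be $\le C(\iota,f_\epsilon)h$, letting $h\to0$ and then $\epsilon\to0$ yields Corollary \ref{medius}.
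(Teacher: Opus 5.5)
The paper gives no proof of this corollary. The only support in sight is Theorem~\ref{convergence}, and you are right that it cannot suffice: for fixed $\iota$ its bound $\iota^{1/2}\|f\|_0+h|w_0|_3$ tends to $\iota^{1/2}\|f\|_0$, not to $0$. Your route is a second, $\iota$-dependent Strang estimate of order $h$. You test with $I_h^Vw-w_h$ and split the consistency error into an $H^3$ part, an NZT biharmonic part and the mismatch $\iota^2(\Delta^3w,I_h^{NZT}v_h-v_h)$. This supplies exactly what is missing. Tracking constants ($|w|_3\lesssim\iota^{-1/2}\|f\|_0$, $|w|_4\lesssim\iota^{-3/2}\|f\|_0$), your bound is the effective one for $h\lesssim\iota$, while Theorem~\ref{convergence} is effective for $\iota\lesssim h$. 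Together they should even give a rate $h^{1/2}$ uniform in $\iota$. For convex $\Omega$, your Steps~1 and~3 do give $\|w-w_h\|_{\iota,h}\le C(\iota,f)\,h$, subject to the corrections below.

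\textbf{Convexity.} As written you prove the corollary only for convex $\Omega$, and Step~2 cannot lift that restriction. On a convex polygon Step~2 is superfluous, since Theorem~\ref{th:GEKRegularity} already gives $w\in H^4(\Omega)$ for every $f\in L^2(\Omega)$. The corollary, unlike the preceding one, is stated without a convexity hypothesis, and its label suggests a regularity-free (medius-type) result. On a nonconvex polygon, smoothing $f$ does not remove the corner singularities of $w^\epsilon$. That case would need density in the solution space ($C_0^\infty(\Omega)$ dense in $H^3_0(\Omega)$) combined with a consistency estimate built on a companion operator $E_h:V_h\to H^3_0(\Omega)$. Your proposal contains neither.

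\textbf{The mismatch term.} What you call the main obstacle is harmless. Once $w\in H^4(\Omega)$, the equation itself gives $\iota^2\Delta^3w=\Delta^2w-f\in L^2(\Omega)$. Moreover, $I_h^{NZT}$ acts on $V_h$ through vertex values and gradients, and $\mathbb P_2(K)\subset\mathcal P_K$, so $\|v_h-I_h^{NZT}v_h\|_0\lesssim h^3|v_h|_{3,h}$. Drop the fallback: replacing $w^\epsilon$ by an arbitrary smooth function destroys the equation you rely on.

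\textbf{The $H^3$ consistency term.} The step that genuinely needs care is $\iota^2[a_h(w,v_h)+(\Delta^3w,v_h)]$. Integrating by parts elementwise all the way down to $(\Delta^3w,v_h)$ needs traces of fourth and fifth derivatives of $w$, which $H^4$ does not provide. It also creates edge terms against the jumps of $\nabla v_h$, which the weak continuity $\int_e[\![\nabla^2v_h]\!]=0$ does not control. With a companion operator, write $a(w,E_hv_h)=-(\Delta^3w,E_hv_h)$ and integrate by parts only once in $a_h(w,v_h-E_hv_h)$. Then only $[\![\nabla^2v_h]\!]$ appears, with $a_h$ read as a sum over the sub-triangles of each macro-element, whose internal edges carry the same weak continuity.
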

 		\begin{figure}[t]
 		\centering
 		\includegraphics[scale=0.25]{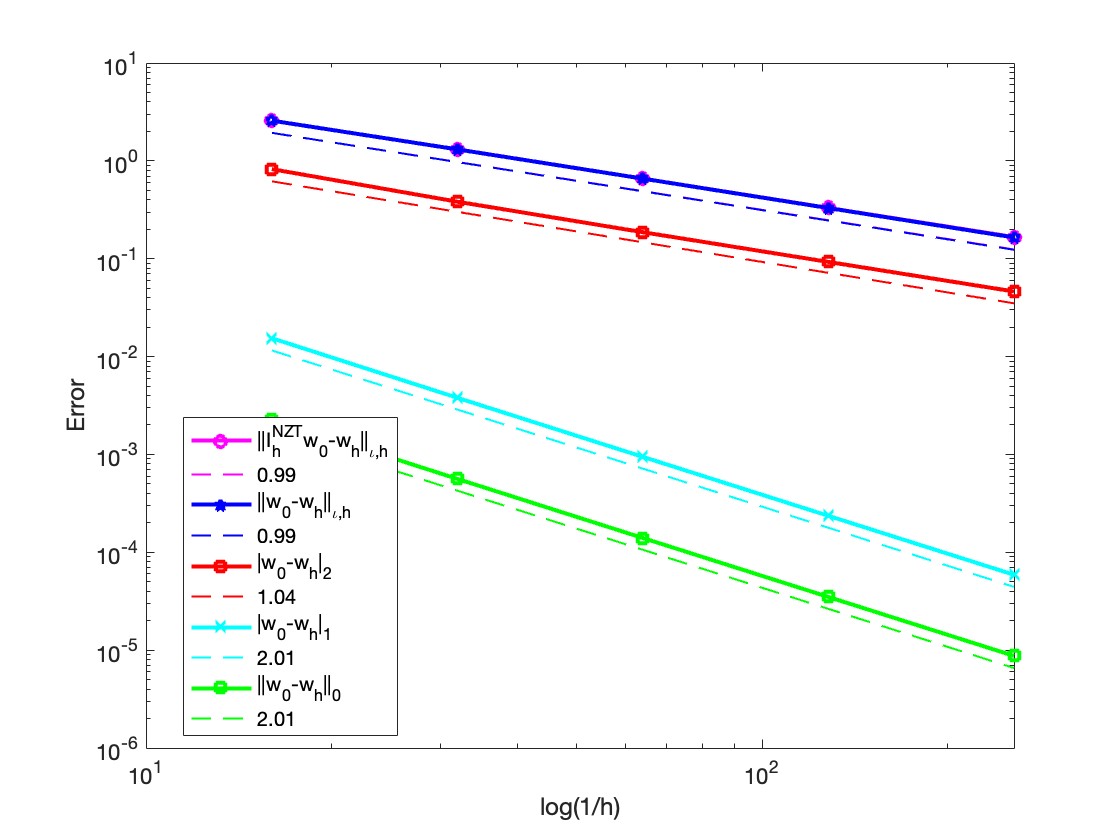}
 		\caption{
 			The performance of the discrete problem \eqref{discreteForm:GEK} with $\iota=10^{-8}$.}
 		\label{Fig:GEK2}
 	\end{figure}

 	\section{Numerical experiments}
 	To validate the theoretical findings, we present a numerical experiment in this section, with the problem data adopted from \cite{ChenHuangHuang2026GEKP}. 
 		This example to testify two parameter robustness with respect to $\lambda$ and $\iota$. Set $\Omega=[0,1]\times[0,1]$. The exact solution of the reduced problem \eqref{model:KPB} is set to be a function in the form
 		\begin{align*}
 			w_0=& \sin^2(\pi x)\sin^2(\pi y).
 		\end{align*}
 		Set $f$ computed from \eqref{model:KPB} to be the right side term of \eqref{model:GEK}. It's easy to check that $ f$ is independent of $\iota$. We can see from Figure~3 in \cite{ChenHuangHuang2026GEKP} that the exact solution of \eqref{model:GEK} $w$ possesses strong boundary layer. We use this example to demonstrate the robustness of the proposed method \eqref{discreteForm:GEK} with respect to size parameter $\iota$.
 	\begin{table}[!htb]%
 		\small
 		\centering
 		\caption{Errors of the discrete problem \eqref{discreteForm:GEK}.}
 		\begin{tabular}{ccccccccccc}
 			\toprule
 			\multirow{2}{*}{$\iota$} &\multirow{2}{*}{Error} & \multicolumn{5}{c}{$h$} \\
 				\cline{3-7}
 		 &&   $ 1/16$  & $ 1/32$  & $ 1/64$   & $ 1/128$ & $ 1/256$\\
 			\midrule[1pt]
 			\multirow{5}{*}{$10^{-6}$}
 		&	$ \|I_h^{NZT}w_0- w_h\|_{\iota,h}$ &2.578e+00 & 1.313e+00    &  6.579e-01  & 3.293e-01 &1.647e-01
 			\\
 			&	$ \|w_0- w_h\|_{\iota,h}$  &2.588e+00  & 1.332e+01    &  6.995e-01  & 4.059e-01 & 2.887e-01
 			\\
 		&	$|w_0- w_h|_{1}$ & 1.541e-02  & 3.791e-03 &9.438e-04  &2.357e-04    &5.899e-05 \\
 		&	$|w_0- w_h|_{2,h}$ & 8.231e-01 &3.824e-01 &1.866e-01&9.269e-02    &4.626e-02  \\
 		& 	$\|w_0- w_h\|_0$  &2.285e-03& 5.622e-04&1.399e-04 &3.494e-05  &  8.750e-06  \\
 			\midrule[1pt]
 			\multirow{5}{*}{$10^{-8}$}
 			&	$ \|I_h^{NZT}w_0- w_h\|_{\iota,h}$  &2.577e+00 & 1.311e+00    &  6.584e-01  & 3.302e-01 &1.664e-01
 		\\
 		&	$ \|w_0- w_h\|_{\iota,h}$  & 2.577e+00  & 1.310e+00    &  6.580e-01  &  3.294e-01 & 1.649e-01
 		\\
 		&	$|w_0- w_h|_{1}$ & 1.541e-02  & 3.791e-03 &9.438e-04  &2.357e-04    &5.899e-05 \\
 		&	$|w_0- w_h|_{2,h}$ & 8.231e-01 &3.824e-01 &1.866e-01&9.269e-02    &4.626e-02  \\
 		& 	$\|w_0- w_h\|_0$  &2.285e-03& 5.622e-04&1.399e-04 &3.494e-05  &  8.750e-06  \\
 		\midrule[1pt]
 		\multirow{5}{*}{$10^{-10}$}
 		&	$ \|I_h^{NZT}w_0- w_h\|_{\iota,h}$ &2.577e+00 & 1.310e+00    &  6.579e-01  & 3.293e-01 &1.647e-01
 	\\
 	&	$ \|w_0- w_h\|_{\iota,h}$  &2.577e+00  & 1.310e+01    &  6.580e-01  &  3.293e-01 & 1.647e-01
 	\\
 		&	$|w_0- w_h|_{1}$ & 1.541e-02  & 3.791e-03 &9.438e-04  &2.357e-04    &5.895e-05 \\
 		&	$|w_0- w_h|_{2,h}$ & 8.231e-01 &3.824e-01 &1.866e-01&9.269e-02    &4.626e-02  \\
 		& 	$\|w_0- w_h\|_0$  &2.285e-03& 5.622e-04&1.399e-04 &3.494e-05  &  8.740e-06  \\
 			\midrule[1pt]
 		\multirow{2}{*}{$0$}
 		&$ \|I_h^{NZT}w_0- w_{h0}\|_{2,h}$  &2.577e+00  & 1.310e-02 & 6.580e-01  & 3.293e-01 &1.647e-01    \\
 		& 	$\|w_0- w_{h0}\|_0$  &2.050e-03&5.068e-04&1.262e-04 &3.149e-05  & 7.866e-06  \\
 		\bottomrule
 		\end{tabular}
 		\label{Tab:GEK22}
 	\end{table}

 	Due to the fact that the mesh we used here is quasi-uniform, it follows from the trace inequality, \eqref{eq:GEKRegularity} and \eqref{eq:regularityKPB} that
 	\begin{align}\label{eq:errorw0w}
 		\|w-w_0\|_{\iota,h} + \|w_0-I_h^{NZT}w_0\|_{\iota,h}\leq \iota^{1/2}\|f\|_0.
 	\end{align}
 	Then, it suffices to observe the behavior of error $
 I_h^{NZT} w_0-w_h $ to verify \eqref{eq:error}.

 The numerical errors with different $h$ and $\iota$ are listed in Table \ref{Tab:GEK22}, from which we can see that the proposed method converges for small $\iota$. It is noted that the numerical solution of \eqref{discreteForm:GEK} converges to the numerical solution of \eqref{eq:NZTKPB} in $H^2$ semi-norm and $L^2$ norm as $\iota\to0$. As observed from Figure \ref{Fig:GEK2}, error $ \|I_h^{NZT}w_0- w_h\|_{\iota,h}$ has linear convergence with respect to the mesh size $h$, while  exhibits optimal convergence. These results combined with \eqref{eq:errorw0w} are consistent with the estimate \eqref{eq:error}. Moreover, we can see from Figure \ref{Fig:GEK2} that $|w_0- w_h|_{2}\eqsim O(h^{1.04})$, $|w_0- w_h|_{1}\eqsim O(h^{2.01})$ and $|w_0- w_h|_{0}\eqsim O(h^{2.01})$.

 {\bf Acknowledgement.} The second author is grateful to Prof. Jun Hu from Peking University who brought his attention to the reference \cite{HuZhang2019} during an international conference in Shanghai University in 2026.

\section*{Reference}
\bibliographystyle{abbrv}
\bibliography{GEK}
\end{document}